\newtheorem{theorem}{Theorem}[section]
\newtheorem{lemma}[theorem]{Lemma}
\theoremstyle{definition}
\newtheorem{definition}[theorem]{Definition}
\theoremstyle{remark}
\numberwithin{equation}{section}
\newtheorem{fac}{Fact}
\newtheorem{rem}{Remark}
\newcommand{\proc}{$(X,\mathscr P,\mu,T)$}
\newcommand{\inv}{invariant} 
\newcommand{\sq}{sequence}
\newcommand{\na}{\mathbb N}
\newcommand{\p}{\mathscr P}              
\newcommand{\Q}{\mathscr Q}              
\newcommand{\R}{\mathscr R}              
\newcommand{\xsmt}{$(X,\mathfrak B,\mu,T)$}
\newcommand{\xt}{$(X,T)$}
\newcommand{\xsm}{$(X,\mathfrak B,\mu)$}
\newcommand{\htop}{\mathbf h_{\mathsf{top}}}
\newcommand{\en}{entropy}
\newcommand{\tl}{topological}
\newcommand{\ds}{dynamical system}
\newcommand{\diam}{\mathsf{diam}}
\newcommand{\dist}{\mathsf{dist}}
\title{Positive topological entropy implies chaos DC2}
\date{July 21, 2011}
\author{T. Downarowicz}
\thanks{Research supported from resources 
for science in years 2009-2012 as research project (grant MENII N N201 394537, Poland)}
\subjclass[2010]{Primary 37A35}
\keywords{Distributional chaos, chaos DC2, ergodic process, scrambled set}
\begin{document}

\begin{abstract}
Using methods of entropy in ergodic theory, we prove as in the title.
\end{abstract}

\maketitle

\section{Introduction}
The notion of chaos was invented in 1975 by Li and Yorke in their seminal paper \cite{LY} in the context of continuous transformations
of the interval. Since then several refinements of chaos have been introduced and extensively studied, mainly in connection with
interval maps or variants of the Sharkovsky Theorem. 
There are in fact very few implications between chaos and other topological or measure-theoretic invariants that hold for general \tl\ \ds s (understood as continuous transformations of compact metric spaces); most of results are either counterexamples or require strong assumptions. Depending on the attitude, this fact may be accounted as a disadvantage of the notions of chaos (as somewhat artificial) or as their advantage (they capture new phenomena). We skip a wider historical review of the topic, as this is done in many papers and survey papers devoted to the subject, and restrict to the basic facts that concern the subject of this paper. We refer the reader to the survey \cite{BHS} for more information.

One of the few general recent affirmative results says that positive \tl\ entropy implies Li-Yorke chaos (\cite{BGKM}). That the converse need not hold was known since 1986 \cite{S1}.

A strengthening of the notion of chaos was introduced in 1994 by Schweizer and Sm\'ital (so called Schweizer--Sm\'ital chaos;
\cite{SS}). Since then it has evolved into three variants of so-called \emph{distributional chaos} DC1, DC2 and DC3 (ordered from strongest to weakest). Around 2000 Pikula showed that positive \tl\ entropy does not imply DC1 (published in 2007 \cite{P}). 
Sm\'ital conjectured that it does imply DC2 (see e.g. \cite{S2}). This problem has been around for several years and several 
papers propose partial solutions (see e.g. \cite{O1} for UPE systems or \cite{S2} for some other cases).

In this paper we solve the problem posed by Sm\'ital by proving his conjecture in the general case, in particular, we strengthen
the result of \cite{BGKM}:

\begin{theorem}\label{tw1}
Assume that a \tl\ \ds\ \xt\ has positive \tl\ \en\ $\htop(T)>0$. Then the system possesses 
an uncountable DC2-scrambled set. 
\end{theorem}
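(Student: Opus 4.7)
The plan is to first replace \xt\ by a convenient symbolic model. By the variational principle there is an ergodic $\mu\in\mtx$ with $h_\mu(T)>0$; passing to a principal topological extension (a zero-dimensional cover with the same measure-theoretic entropy) one may assume \xt\ is a subshift on a finite alphabet, so that $d(x,y)<2^{-n}$ amounts to $x$ and $y$ agreeing on a window of length $n$. In this language a DC2-scrambled pair is simply one whose codes agree on arbitrarily long windows with upper frequency $1$, while they differ in at least one coordinate on a set of positive lower density.

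Next, I would mine positive entropy for combinatorial independence. From $h_\mu(T)>0$ one extracts a clopen two-set partition $\p=\{P_0,P_1\}$ of positive static entropy and a set $E\subseteq\na$ of positive lower density that is an \emph{independence set} for the $\p$-process: for every finite $F\subseteq E$ and every $\omega\in\{0,1\}^F$,
\[
\mu\Bigl(\bigcap_{n\in F}T^{-n}P_{\omega(n)}\Bigr)>0.
\]
This follows from the Kerr--Li/Huang--Ye theory of IE-sets (the entropy pair $(P_0,P_1)$ is an IE-pair carrying a positive-density independence set), or can be produced directly via Rokhlin towers and the Shannon--McMillan--Breiman theorem. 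Either way, inside arbitrarily tall Rokhlin towers one may prescribe the $\p$-symbols along $E$ freely.

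With this in hand, I would build the uncountable scrambled set as a Mycielski-type Cantor scheme. Construct nested nonempty clopen sets $(C_w)_{w\in\{0,1\}^{<\omega}}$ with $C_{w0}\sqcup C_{w1}\subseteq C_w$, arranged so that:
(i) any two points in $C_w$ agree on an initial window of length $L_{|w|}\to\infty$, which forces every pair of distinct infinite branches to satisfy $\Phi^*_{xy}(\varepsilon)=1$ for every $\varepsilon>0$; and
(ii) for each split $\{C_{w0},C_{w1}\}$ the $\p$-codes are chosen, using the independence on $E$, so that orbits from distinct children disagree in at least a fixed fraction $\delta>0$ of positions on a long block, and this minimum disagreement is preserved by every later refinement. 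A standard density count then gives $\Phi_{xy}(\varepsilon_0)\le 1-\delta'$ for universal $\varepsilon_0,\delta'>0$, and the Cantor set $K=\bigcap_{n}\bigcup_{|w|=n}C_w$ is uncountable and DC2-scrambled.

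The crux is the tension between (i) and (ii): the agreement windows imposed for mean proximality necessarily cover some of the positions in $E$ that were meant to carry the independent disagreements. One must therefore let $L_n$ grow much more slowly than the heights of the Rokhlin towers supplying the independent codes, and propagate each level's code choices into all later levels without eroding the required disagreement frequency. This quantitative balancing, together with ensuring that the lower-frequency estimate survives simultaneously for all uncountably many pairs of branches, is what I expect to be the main technical obstacle; positive entropy is exactly what supplies the room to do this, because the independence set $E$ and the supporting Rokhlin towers can be chosen of arbitrarily long scale.
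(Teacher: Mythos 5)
Your plan takes a genuinely different route from the paper, and it is worth naming the difference: you propose to mine positive entropy through the Kerr--Li/Huang--Ye theory of independence sets (IE-pairs), whereas the paper mines it through the Shannon--McMillan theorem, ``roughly equal'' partitions, conditioning on the Pinsker sigma-algebra, and a Hamming-ball counting argument. If the construction could be closed, your route would be a legitimate alternative. However, two genuine gaps prevent that.

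First, the reduction to a symbolic model does not preserve the conclusion. If $\pi\colon (Y,S)\to(X,T)$ is a principal zero-dimensional extension and $E\subset Y$ is DC2-scrambled, there is no reason $\pi(E)$ is DC2-scrambled in $X$: a factor map can collapse orbit segments that are far apart upstairs, so the second condition (positive upper density of separation by some $t_0$) is not stable under projection, even though the first condition is. When you then choose a clopen two-set partition $\{P_0,P_1\}$ of the subshift, the fact that two points differ in that partition says nothing about their distance after applying $\pi$. The paper sidesteps this entirely by constructing partitions of $X$ itself, with one small open ``separating'' atom and the remaining atoms compact and pairwise at distance $\ge t_0$ in $X$ --- that is precisely what makes ``differ in the partition, not in $P_0$'' translate to ``$\dist\ge t_0$'' in the original space. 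Your reduction would need to pull back such a separating partition from $X$, not use an arbitrary clopen partition of the cover.

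Second, and this is the obstacle you already flagged: an independence set gives you free prescription of symbols on $E$ unconditionally, but the Cantor scheme needs that freedom \emph{inside} a shrinking cylinder $C_w$ that has already been forced to agree on a long window $[0,L_{|w|})$. The definition of an IE-set says $\bigcap_{n\in F}T^{-n}P_{\omega(n)}\neq\emptyset$ for $F\subset E$, but says nothing about whether this intersection meets $C_w$. This is exactly a conditional-independence problem. The paper resolves it by conditioning on the Pinsker sigma-algebra $\Pi$ (so that $h_\mu(T,\p\mid\Pi)=h_\mu(T,\p)$), then on the even-indexed sigma-algebra $\mathfrak R$, and using Lemma~\ref{l2} and Lemma~\ref{l1} to show the odd-indexed partition $\R_{2k+1}$ is, conditionally on the already-fixed data, still roughly equal and $\delta$-independent, with quantitative parameters tight enough that a Hamming-ball count yields two disjoint continuations at every node. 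Your proposal correctly identifies that some such balancing is needed, but does not supply the mechanism; without it the nesting step of the Cantor scheme has no justification.
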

Additionally, the scrambled set can be obtained perfect, which resolves another conjecture of Sm\'ital (see final remarks of this paper).

\medskip
Recall that a set $E\subset X$ is \emph{DC2-scrambled} if so is any pair $x,y\in E$, $x\neq y$. It remains to recall the definition of a DC2-scrambled pair: 

\begin{definition}
A pair $x,y\in X$ is DC2-scrambled if the following two conditions hold:
\begin{align*}
\forall_{t>0}\ \ \limsup_{n\to\infty} &\tfrac1n \#\{i\in[0,n-1]: \dist(T^ix,T^iy)< t\} =1, \\
\exists_{t_0>0} \ \ \liminf_{n\to\infty} &\tfrac1n \#\{i\in[0,n-1]: \dist(T^ix,T^iy)< t_0\} <1, 
\end{align*}
in other words, the orbits of $x$ and $y$ are arbitrarily close with upper density one, but for some distance -- with lower density
smaller than one.
\end{definition}

The inspiration for the proof presented in this paper came from the interpretation of the definition. It is not hard to see that
the above two conditions are equivalent to the following ones:
$$
\liminf_{n\to\infty} \frac1n \sum_{i=1}^n \dist(T^ix,T^iy)=0 \ \ \text{\ \ and \ } \ \ \limsup_{n\to\infty} \frac1n \sum_{i=1}^n \dist(T^ix,T^iy)>0.
$$
(We leave the easy verification of the equivalence to the interested reader.)
In this manner DC2 is expressed in terms of ergodic averages (of a function on the product space), which makes it a very natural ergodic-theoretic object. (DC1 or DC3 do not translate so well, not to mention Li-Yorke chaos). Although we are not going to use this rephrased definition directly in the proof, it helped us realize that the key to the solution lies in ergodic theory and measure-theoretic entropy. Indeed, topological properties play a marginal role in the proof. 

Most of the time we will be concerned with an ergodic measure-theoretic dynamical system \xsmt, where \xsm\ is 
a standard probability space and $T:X\to X$ is a measure-preserving transformation (endomorphism). Any finite measurable partition 
$\p$ of $X$ generates a factor of the system obtained by projecting onto the subinvariant sub-sigma algebra $\p^{[0,\infty)}$ of 
$\mathfrak B$, where we use the following notational convention: for $D\subset\na\cup\{0\}$ we set
$$
\p^D = \bigvee_{n\in D}T^{-n}(\p).
$$ 
(if $D$ is finite this is a finite partition, otherwise it is a sigma-algebra). Abusing slightly the notation, we will denote such 
a factor by \proc\ and call it \emph{the process generated by $\p$}. This process is isomorphic to a symbolic system, in which $\p$ plays the role of the alphabet and in which $\mu$ becomes a shift-invariant measure.

This and the rest of our notation (which is relatively standard) matches roughly that in \cite{D}. Most of the facts concerning entropy of processes and other facts in ergodic theory and \tl\ dynamics can be found in that book; in most cases we provide statement reference.

\section{Some tools}

\subsection{Roughly equal partitions}
The Shannon--McMillan Theorem (Mc Millan, 1953) asserts, that given an ergodic process \proc\ with positive entropy $h=h_\mu(T,\p)$, and some $\varepsilon>0$, if $n$ is large enough then $1-\varepsilon$ of the space is covered by cylinders of length $n$ whose measures range within $2^{-n(h\pm\varepsilon)}$.
(In 1957 Breiman strengthened this result proving almost everywhere convergence of the corresponding information function; this version is known as the \emph{Shannon--McMillan--Breiman Theorem} and can be found in any textbook on ergodic theory.)

The above property of the partition $\p^{[0,n-1]}$ formed by the cylinders of length $n$ becomes the main tool in our proof. For notational convenience, let us isolate this property in a separate definition.

\begin{definition}
We will say that a probability vector $\mathbf P=(p_1,p_2,\dots,p_l)$ is \emph{roughly equal} with parameters $\varepsilon>0$, $n\in\na$ and $h>0$ if
$$
\sum_{i\in I}p_i>1-\varepsilon,
$$
where $I=\{i: p_i\in(2^{-n(h+\varepsilon)},2^{-n(h-\varepsilon)})\}$.
\end{definition}
The condition says that majority of the mass of $\mathbf P$ is carried by entries approximately equal to $2^{-nh}$ (hence there is approximately $2^{nh}$ of them). The meaning of ``approximately'' is very rough as the tolerance is up to the (large) multiplicative factor $2^{n\varepsilon}$.

It is a fairly obvious observation that the condition of being roughly equal with some fixed parameters $\varepsilon, n, h$ is open in $\ell^1$.

\begin{definition}
Let $\p=\{P_1,P_2,\dots,P_l\}$ be a finite measurable partition of a standard probability space \xsm. We will say that the partition $\p$ is \emph{$\mu$-roughly equal with parameters $\varepsilon,n,h$} whenever $\mathbf P_{\mu,\p}$ is roughly equal (with the same parameters), 
where $\mathbf P_{\mu,\p}$ denotes the vector of values assigned to $\p$ by the measure $\mu$, i.e.,
$$
\mathbf P_{\mu,\p}=(\mu(P_1),\mu(P_2),\dots,\mu(P_l)).
$$ 
The atoms of $\p$ of measure ranging within $2^{-n(h\pm\varepsilon)}$ will be referred to as \emph{good}.
\end{definition}

We can now rephrase the assertion of the Shannon--McMillan Theorem: for large enough $n$ the partition $\p^{[0,n-1]}$ is $\mu$-roughly equal with parameters $\varepsilon,n,h$. Obviously, by invariance of the measure, the same
holds for any partition of the form $\p^{[a,b-1]}$, where $b-a=n$.

Further, in the same context, if $\mathfrak F$ is some subinvariant sigma-algebra, then the conditional version of the Shannon--McMillan Theorem implies that for a long enough interval $[a,b-1]$ there is a set of measure at least $1-\varepsilon$ of atoms $y$ of $\mathfrak F$ such that the partition $\p^{[a,b-1]}$ is $\mu_y$-roughly equal with parameters $\varepsilon, n, h_\mu(T,\p|\mathfrak F)$, where $\mu_y$  is the disintegration measure of $\mu$ supported by the atom $y$. This conditional version (with a proof, and in the stronger, almost everywhere version) can be found e.g. in Appendix B of \cite{D}.

\medskip
The following fact is completely standard, we leave the proof as an exercise.
\begin{fac}\label{f0}
Let $X'$ be a subset of positive measure of a probability space \xsm. Fix some $\varepsilon$, $h$ 
and $\varepsilon'>\frac{\varepsilon}{\mu(X')}$. Then, for $n$ large enough the following implication
holds: if the partition $\p$ is $\mu$-roughly equal with parameters $\varepsilon, n, h$ then the same partition is 
$\mu_{X'}$-roughly equal with parameters $\varepsilon', n, h$, where $\mu_{X'}$ is the conditional measure of $\mu$ on $X'$. \qed
\end{fac}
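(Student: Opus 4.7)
The plan is a direct two-sided comparison of atom sizes under $\mu$ and under the conditional measure $\mu_{X'}(A)=\mu(A\cap X')/\mu(X')$. Write $\mathcal P=\{P_1,\dots,P_l\}$ and set
$$
I=\{i:\mu(P_i)\in(2^{-n(h+\varepsilon)},2^{-n(h-\varepsilon)})\},\qquad I'=\{i:\mu_{X'}(P_i)\in(2^{-n(h+\varepsilon')},2^{-n(h-\varepsilon')})\}.
$$
By hypothesis $\sum_{i\in I}\mu(P_i)>1-\varepsilon$, and the goal is to show $\sum_{i\in I'}\mu_{X'}(P_i)>1-\varepsilon'$ once $n$ is sufficiently large (depending on $\varepsilon,\varepsilon',h$ and $\mu(X')$, but not on $\mathcal P$).

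First I would bound the $\mu_{X'}$-mass of the complement of $I$. Since the atoms outside $I$ carry $\mu$-mass less than $\varepsilon$ and $\mu_{X'}(A)\le\mu(A)/\mu(X')$, their $\mu_{X'}$-mass is at most $\varepsilon/\mu(X')$. Next I would show the upper-bound half of membership in $I'$: for $i\in I$ we have $\mu_{X'}(P_i)\le\mu(P_i)/\mu(X')<2^{-n(h-\varepsilon)}/\mu(X')$, which is smaller than $2^{-n(h-\varepsilon')}$ as soon as $2^{n(\varepsilon'-\varepsilon)}>1/\mu(X')$. Since $\varepsilon'>\varepsilon/\mu(X')\ge\varepsilon$, this holds for all $n$ large enough (independently of $\mathcal P$).

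The only subtlety is the lower-bound half, i.e.\ that not too many atoms of $I$ become ``too small'' under $\mu_{X'}$. Let
$$
J=\{i\in I:\mu_{X'}(P_i)\le 2^{-n(h+\varepsilon')}\}.
$$
A counting argument controls this set: every $i\in I$ satisfies $\mu(P_i)>2^{-n(h+\varepsilon)}$, so $|I|<2^{n(h+\varepsilon)}$, and therefore
$$
\sum_{i\in J}\mu_{X'}(P_i)\le |J|\cdot 2^{-n(h+\varepsilon')}\le 2^{n(h+\varepsilon)}\cdot 2^{-n(h+\varepsilon')}=2^{-n(\varepsilon'-\varepsilon)},
$$
which tends to $0$ as $n\to\infty$. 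So for $n$ large we get $\sum_{i\in J}\mu_{X'}(P_i)<\varepsilon'-\varepsilon/\mu(X')$ (positive by hypothesis).

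Combining: $I\setminus J\subseteq I'$, hence
$$
\sum_{i\in I'}\mu_{X'}(P_i)\ge \sum_{i\in I}\mu_{X'}(P_i)-\sum_{i\in J}\mu_{X'}(P_i)>\Bigl(1-\tfrac{\varepsilon}{\mu(X')}\Bigr)-\Bigl(\varepsilon'-\tfrac{\varepsilon}{\mu(X')}\Bigr)=1-\varepsilon',
$$
which is the required rough equality. The only place where $n$ has to be chosen large is in the two estimates $2^{n(\varepsilon'-\varepsilon)}>1/\mu(X')$ and $2^{-n(\varepsilon'-\varepsilon)}<\varepsilon'-\varepsilon/\mu(X')$; both are harmless inequalities in $n$, and the assumption $\varepsilon'>\varepsilon/\mu(X')$ is used precisely to make the right-hand side of the final display positive. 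No real obstacle arises; this is essentially a bookkeeping exercise about how the conditional measure can at most inflate atom weights by the factor $1/\mu(X')$, together with a trivial cardinality bound on the number of ``good'' atoms.
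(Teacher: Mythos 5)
Your proof is correct and complete; the paper leaves this Fact as an exercise, and your argument is exactly the standard bookkeeping it has in mind: conditioning on $X'$ can inflate an atom's weight by at most the factor $1/\mu(X')$ (handled by taking $n$ large enough that $2^{n(\varepsilon'-\varepsilon)}>1/\mu(X')$, using that $\varepsilon'>\varepsilon/\mu(X')\ge\varepsilon$), while the atoms that shrink below $2^{-n(h+\varepsilon')}$ are controlled by the cardinality bound $|I|<2^{n(h+\varepsilon)}$. Both thresholds on $n$ depend only on $\varepsilon$, $\varepsilon'$ and $\mu(X')$, not on the partition, so the required uniformity holds.
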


\subsection{Conditionally independent partitions}

We assume familiarity of the reader with the notion of Shannon entropy $H(\cdot)$ for probability vectors and $H_\mu(\cdot)$ 
for partitions.
\begin{definition}
Let $\p$ be a finite partition of a probability space \xsm\ and let $\mathfrak F$ be a sub-sigma algebra of $\mathfrak B$.
We will say that $\p$ is \emph{$\delta$-independent} of $\mathfrak F$, (which we will write as $\p\bot^{\!\!\delta}\mathfrak F$), if 
$$
H_\mu(\p|\mathfrak F)>H_\mu(\p)-\delta.
$$
In particular, if $\mathfrak F$ is the algebra generated by a finite partition $\Q$ we will say that $\p$ is \emph{$\delta$-independent} of $\Q$ (and write $\p\bot^{\!\!\delta}\Q$).
\end{definition}
Since we can write the above as $H_\mu(\p\vee\Q)> H_\mu(\p)+H_\mu(\Q)-\delta$, it is immediately seen that the relation $\bot^{\!\!\delta}$ applied to partitions is symmetric. A very important (and easy) fact is that if $\p_1\preccurlyeq\p$ and $\mathfrak F_1\preccurlyeq\mathfrak F$ then $\p\bot^{\!\!\delta}\mathfrak F$ implies $\p_1\bot^{\!\!\delta}\mathfrak F_1$.
Another important fact is that if $(\Q_k)$ is a \sq\ of partitions that generates $\mathfrak F$ and 
$\p\bot^{\!\!\delta}\Q_k$ for every $k$, then $\p\bot^{\!\!\delta}\mathfrak F$ (see \cite{D}, Section 3.1).

\smallskip
What will be very important for us is the interpretation of $\delta$-independence. 
Using disintegration, we can rewrite the definition as
$$
\int H(\mathbf P_{\mu_z,\p})\,d\nu(z) > H(\mathbf P_{\mu,\p})-\delta.
$$
where $\mu_z$ is the disintegration measure of $\mu$ on the atom $z$ of $\mathfrak F$ and $\nu$ is the marginal of $\mu$ 
on $\mathfrak F$. We also have $\mathbf P_{\mu,\p} = \int \mathbf P_{\mu_z,\p}\,d\nu(z)$, hence the above inequality compares a
generalized convex combination of values of the entropy attained on some $l$-dimensional probability vectors 
with the entropy of the corresponding generalized convex combination of these vectors. Because entropy is a continuous 
and strictly convex function on probability vectors of dimension $l$, and the set of all such vectors compact, we can 
deduce that if we fix the partition $\p$ and $\varepsilon>0$ then we can find 
$\delta$ so small that for any sigma-algebra $\mathfrak F$, $\p\bot^{\!\!\delta}\mathfrak F$ implies that 
\begin{equation}\label{b}
\|\mathbf P_{\mu,\p}-\mathbf P_{\mu_z,\p}\|_1<\varepsilon \text{ \ \ for atoms $z$ of $\mathfrak F$ joint measure at least $1-\varepsilon$}
\end{equation}
(here $\|\cdot\|_1$ denotes the norm in $\ell_1$). We refer the reader to Fact 1.1.11 in \cite{D} for a more complete proof. 

Combining this fact with the $\ell^1$ openness of the property of being roughly equal we obtain the following statement:

\begin{fac}\label{f1} Suppose $\p$ is a finite measurable partition of a probability space \xsm, which is $\mu$-roughly equal with 
parameters $\alpha, n, h$. Then for any $\beta>0$ there exists $\gamma>0$ such that whenever a sub-sigma algebra $\mathfrak F$  satisfies $\p\bot^{\!\!\gamma}\mathfrak F$ then $\p$ is $\mu_z$-roughly equal with the same parameters $\alpha, n,h$ and the same 
good atoms, for atoms $z$ of $\mathfrak F$ of joint measure at least $1-\beta$. \qed
\end{fac}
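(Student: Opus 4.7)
The plan is to combine the two tools just laid out: (i) the $\ell^1$-openness of the ``roughly equal'' property, noted right after its definition, and (ii) the estimate~\eqref{b}, which says that once $\p\bot^{\!\!\gamma}\mathfrak F$ is guaranteed for a small enough $\gamma$, the disintegration vectors $\mathbf P_{\mu_z,\p}$ are $\ell^1$-close to $\mathbf P_{\mu,\p}$ on a $\mathfrak F$-set of atoms $z$ of almost full measure.

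To implement the ``same good atoms'' clause, I would first quantify the strictness of the two defining inequalities of rough equality at $\mathbf P_{\mu,\p}$. Write $I$ for the set of $\mu$-good atom indices. Because $\p$ has finitely many cells and the defining interval $(2^{-n(h+\alpha)},2^{-n(h-\alpha)})$ is open, there is a uniform gap $d>0$ by which each $\mu(P_i)$, $i\in I$, sits strictly inside this interval. Similarly, the strict inequality $\sum_{i\in I}\mu(P_i)>1-\alpha$ leaves some slack $\eta>0$. I then set $\varepsilon=\min(\beta,d,\eta)$ and apply~\eqref{b} with this $\varepsilon$ to produce a $\gamma>0$ such that $\p\bot^{\!\!\gamma}\mathfrak F$ forces $\|\mathbf P_{\mu,\p}-\mathbf P_{\mu_z,\p}\|_1<\varepsilon$ for atoms $z$ of joint measure at least $1-\varepsilon\geq 1-\beta$.

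For such $z$, the pointwise (hence $\ell^\infty$) perturbation of each coordinate is bounded by $\varepsilon\leq d$, so every $i\in I$ retains $\mu_z(P_i)\in(2^{-n(h+\alpha)},2^{-n(h-\alpha)})$ and is therefore $\mu_z$-good. The same $\ell^1$ bound gives $\sum_{i\in I}\mu_z(P_i)>\sum_{i\in I}\mu(P_i)-\varepsilon>1-\alpha$, so $\p$ is $\mu_z$-roughly equal with the same parameters and the same good atoms, as required. The argument is otherwise routine; the only delicate point — and thus the principal ``obstacle'' — is precisely this quantification: abstract $\ell^1$-openness alone would not guarantee that $I$ persists as a set of good atoms under the perturbation to $\mu_z$, and the auxiliary slacks $d$ and $\eta$ are introduced exactly to secure this.
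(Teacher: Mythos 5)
Your proof is correct and follows exactly the route the paper intends: combine the estimate~\eqref{b} with the $\ell^1$-openness of rough equality, choosing $\varepsilon$ small enough to preserve both the membership of the good indices in the open interval $(2^{-n(h+\alpha)},2^{-n(h-\alpha)})$ and the mass bound $\sum_{i\in I}\mu_z(P_i)>1-\alpha$. The paper itself offers no argument (it is stated as a Fact with a terminating \qed, regarded as immediate from the preceding discussion), so your contribution is simply to make explicit the quantitative slacks $d$ and $\eta$ that underlie that immediacy.
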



We will need a bit more intricate reversed statement:
\begin{lemma}\label{l1}
Suppose $\p$ is a finite measurable partition of a probability space \xsm, which is $\mu$-roughly equal with parameters $\alpha, n, h$. 
Fix some positive $\beta<\alpha$ and $h'>0$. Then there exist a positive $\gamma<\beta$ and $m$ such that whenever $\Q$ is $\mu$-roughly equal with parameters $\gamma, m, h'$ and $\p\bot^{\!\!\gamma}\Q$ then 
\begin{itemize}
	\item $\p\vee\Q$ is $\mu$-roughly equal with parameters $\alpha+\beta+\gamma, m, h'$, and
	\item for a collection of atoms $A$ of $\p$, of joint measure at least $1-\alpha-\beta$, 
$\Q$ is $\mu_A$-roughly equal with parameters $\beta, m , h'$. 
\end{itemize}
\end{lemma}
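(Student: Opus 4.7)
The plan is to apply Fact~\ref{f1} with the roles of $\p$ and $\Q$ swapped and then read off both bullets by direct arithmetic on the masses $\mu(P_i\cap Q_j)=\mu(P_i)\mu_{P_i}(Q_j)$. Write $I=\{i:\mu(P_i)\in(2^{-n(h+\alpha)},2^{-n(h-\alpha)})\}$, so $\sum_{i\in I}\mu(P_i)>1-\alpha$ by the hypothesis on $\p$; and, once $\Q$ is given, write $G=\{j:\mu(Q_j)\in(2^{-m(h'+\gamma)},2^{-m(h'-\gamma)})\}$ for its good set at the parameters $(\gamma,m,h')$. I first take $m$ so large (in terms of $n,h,\alpha,\beta$) that $n(h+\alpha)/m\le\alpha+\beta$, and then $\gamma<\beta$ so small that, for every $\Q$ of parameters $(\gamma,m,h')$, the hypothesis $\p\bot^{\gamma}\Q$ yields, via Fact~\ref{f1} applied with $\Q$ as the roughly equal partition and $\p$ as the sub-sigma algebra, a collection $J$ of indices of $\p$-atoms with $\sum_{i\in J}\mu(P_i)\ge 1-\beta$ such that for each $i\in J$ the partition $\Q$ is $\mu_{P_i}$-roughly equal with the same parameters $(\gamma,m,h')$ and the same good set $G$.

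Setting $A=\bigcup_{i\in J}P_i$, one has $\mu(A)\ge 1-\beta\ge 1-\alpha-\beta$; and since $\mu_A$ is a convex combination of the $\mu_{P_i}$ for $i\in J$, for each $j\in G$ the value $\mu_A(Q_j)$ is a convex combination of numbers in $(2^{-m(h'+\gamma)},2^{-m(h'-\gamma)})$ and hence lies in the same interval, while $\mu_A(\bigcup_{j\in G}Q_j)\ge 1-\gamma$. Since $\gamma<\beta$, this is the second bullet. For the first bullet I look at the atoms $P_i\cap Q_j$ with $i\in I\cap J$ and $j\in G$: the mass $\mu(P_i\cap Q_j)=\mu(P_i)\mu_{P_i}(Q_j)$ lies in $(2^{-n(h+\alpha)-m(h'+\gamma)},\,2^{-n(h-\alpha)-m(h'-\gamma)})$, and by the choice of $m$ this interval sits inside $(2^{-m(h'+\alpha+\beta+\gamma)},\,2^{-m(h'-\alpha-\beta-\gamma)})$, so these atoms of $\p\vee\Q$ are good for the parameters $(\alpha+\beta+\gamma,m,h')$. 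Their total mass equals $\sum_{i\in I\cap J}\mu(P_i)\mu_{P_i}(\bigcup_{j\in G}Q_j)\ge(1-\gamma)(1-\alpha-\beta)\ge 1-\alpha-\beta-\gamma$, as required.

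The main obstacle is precisely this first step, because Fact~\ref{f1} as stated makes its threshold ``$\gamma$'' depend on the roughly equal partition, which in the swapped application is the still-unknown $\Q$. What rescues the argument is the ingredient behind Fact~\ref{f1}, namely \eqref{b}: the translation of entropy defect into $\ell^{1}$-closeness of the conditional distributions of $\Q$ given atoms of $\p$ can be made quantitatively uniform in $\Q$ via a Pinsker-type estimate, depending only on the tolerance (and, through the fixed $G$, on $m$ and $h'$). Once that uniform version is in hand, the rest reduces to the elementary mass arithmetic above and the a priori choice of $m$.
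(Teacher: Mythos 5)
Your approach is genuinely different from the paper's: you apply Fact~\ref{f1} with the roles of $\p$ and $\Q$ swapped ($\Q$ as the roughly equal partition, $\p$ as the conditioning algebra), whereas the paper keeps $\p$ as the roughly equal partition of Fact~\ref{f1} and conditions on $\Q$, then passes from facts about $\mu_B$ for $B\in\Q$ to facts about $\mu_A$ for $A\in\p$ via a Markov/Chebyshev step on the exceptional set $V'$. Your route has the advantage of giving the second bullet almost for free, but it runs into exactly the circularity you flag: the threshold $\gamma$ in Fact~\ref{f1} is tailored to the \emph{margin} of the roughly equal partition (the gap between its atom masses and the boundary of the interval $2^{-m(h'\pm\gamma)}$, and the gap between $\sum_{j\in G}\mu(Q_j)$ and $1-\gamma$), and that margin is a property of $\Q$, which is not yet chosen.

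The Pinsker step you invoke to ``rescue'' the argument does give something uniform, but not the thing you use. From $\p\bot^{\!\gamma}\Q$ one gets $\sum_A\mu(A)\,\|\mathbf P_{\mu_A,\Q}-\mathbf P_{\mu,\Q}\|_1^2<2\gamma\ln 2$ with a constant independent of $\Q$, hence, by Markov, a set $J$ of atoms of $\p$ of mass $\ge 1-\beta$ on which $\|\mathbf P_{\mu_{P_i},\Q}-\mathbf P_{\mu,\Q}\|_1<\eta$ for a prescribed $\eta$. That is uniform $\ell^1$-closeness of the whole vectors. It does \emph{not} give your claim that ``for each $i\in J$, $\Q$ is $\mu_{P_i}$-roughly equal with the same good set $G$,'' nor the subsequent assertion that for each $j\in G$ the number $\mu_A(Q_j)$ is a convex combination of values in $(2^{-m(h'+\gamma)},2^{-m(h'-\gamma)})$: an $\ell^1$-perturbation of size $\eta$ can move an individual $\mu_{P_i}(Q_j)$ far outside that interval, since $\eta$ is fixed while the interval has width $\sim 2^{-mh'}$. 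What one can salvage is an aggregate statement: after choosing $m$ large, the set $S_2=\{j\in G:\mu_{P_i}(Q_j)<2^{-m(h'+\beta)}\}$ has $\mu_{P_i}$-mass at most $\#G\cdot 2^{-m(h'+\beta)}\le 2^{-m(\beta-\gamma)}$, while $S_1=\{j\in G:\mu_{P_i}(Q_j)>2^{-m(h'-\beta)}\}$ has $\mu_{P_i}$-mass $O(\eta)$ since on $S_1$ the $\ell^1$-discrepancy per atom is a definite fraction of $\mu_{P_i}(Q_j)$. Only after this extra counting do you recover both bullets (with good sets that are large subsets of $G$, not $G$ itself). So the idea is sound and repairable, but the step from uniform $\ell^1$-closeness to ``same good atoms'' is a genuine gap in the write-up; the paper's unswapped application of Fact~\ref{f1} avoids the issue entirely because there the roughly equal partition is the \emph{fixed} $\p$, whose margin can be read off once and for all before $\gamma$ is chosen.
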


\begin{proof}
We choose $\gamma$ according to Fact \ref{f1}, and then, for atoms $B$ of $\Q$ of joint measure at least $1-\beta$, $\p$ is $\mu_B$-roughly equal with parameters $\alpha, n,h$. Moreover, $\p$ has the same good atoms for all such $B$. Let $V'$ denote the 
union of bad atoms $B$ of $\Q$ and atoms $B$ which do not satisfy the above. This set has measure $\mu$ at most $\beta+\gamma$. Every other atom $B$ of $\Q$ splits into a set of measure $\mu_B$ at most $\alpha$ and atoms $A\cap B$ of $\p\vee\Q$ of measures $\mu$ ranging within $2^{-m(h'\pm\gamma)-n(h\pm\alpha)}$. If $m$ is much larger than $n$, this range is contained within $2^{-m(h'\pm2\gamma)}$. The joint measure $\mu$ of these atoms $A\cap B$ is at least $1-\alpha-\beta-\gamma$. This implies (with sacrifice of precision) that $\p\vee\Q$ is $\mu$-roughly equal with the parameters $\alpha+\sqrt{2\beta}+\gamma, m, h'$ (we have artificially enlarged $\beta$ for cosmetic reasons, we will adjust it in a moment).

In what follows, for shorter writing, we replace $\beta+\gamma$ by the larger term $2\beta$. 
It is clear that the collection of atoms $A$ of $\p$ such that $\mu_A(V')>\sqrt{2\beta}$ has jointly the measure $\mu$ smaller than $\sqrt{2\beta}$. Among the remaining atoms $A$ of $\p$ we select only the good ones and call them ``very good''. The joint measure $\mu$ of these atoms $A$ is at least $1-\alpha-\sqrt{2\beta}$. The set $A\setminus V'$ has, for a very good $A$, the measure $\mu_A$ at least $1-\sqrt{2\beta}$ and it consists of good atoms $A\cap B$ (their measures $\mu$ range within $2^{-m(h'\pm\delta)-n(h\pm\alpha)}$). Notice that the measure $\mu_A$ of $A\cap B$ cannot be smaller than its measure $\mu$, while it can be larger by at most the multiplicative factor $2^{n(h+\alpha)}$. So, the measure $\mu_A$ of a good $A\cap B$ ranges between $2^{-m(h'+\gamma)-n(h+\alpha)}$ and $2^{-m(h'-\gamma)+2n\alpha}$, which, for large enough $m$, lies within $2^{-m(h'\pm2\gamma)}$. We obtain (again, with sacrifice of precision) that $\Q$ is $\mu_A$-roughly equal with parameters $\sqrt{2\beta}, m , h'$, and this is true on a subset of good atoms $A$ whose joint measure $\mu$ is at least $1-\alpha-\sqrt{2\beta}$.

Since $\beta$ is arbitrary, we can replace it with $\frac{\beta^2}2$ and we obtain both hypotheses of the lemma.
\end{proof}
\begin{rem}\label{rem}
{\rm Notice in the proof that the intersection of the union of these good atoms of $\p$ on which $\Q$ is roughly equal 
(as specified in the lemma) with the union of good atoms of $\p\vee\Q$ has measure at least $1-\alpha-2\sqrt{2\beta}$.
These atoms alone allow to classify $\p\vee\Q$ as $\mu$-roughly equal with the parameters $\alpha+2\sqrt{2\beta}, m,h'$.
Adjusting $\beta$ and sacrificing some good atoms of $\p\vee\Q$ (treating them as ``not good''), we obtain that the lemma holds with the additional property $V_0\supset V_1\supset V_2$, where $V_0$ is the union of good atoms of $\p$, $V_1$ is the union of 
these atoms of $\p$ on which $\Q$ is roughly equal, and $V_2$ is the union of good atoms of $\p\vee\Q$. We will use this
fact later.}
\end{rem}

\subsection{Pinsker factor}
Let \xsmt\ be some ergodic system with positive entropy $h_\mu(T)$. By $\Pi$ we will denote the Pinsker sigma-algebra of the system. It is obtained as $\bigvee_\p\Pi_\p$, where $\p$ ranges over all finite partitions of $X$ and $\Pi_\p = \bigcap_{n\ge 1}\p^{[n,\infty)}$ ($\Pi_\p$ is sometimes called the \emph{remote future} of the process generated by $\p$). It is known that the sigma-algebra $\Pi$ is invariant (i.e., $T^{-1}(\Pi)=\Pi$) and the factor associated with $\Pi$ has entropy zero (in fact it is the 
largest factor with entropy zero). In particular, for any finite partition $\p$ we have $h_\mu(T,\p|\Pi) = h_\mu(T,\p)$. See 
\cite{D}, sections 3.2 and 4.2 for the proofs and more information.

\begin{lemma} \label{l2}
Let $(X,\mu,T)$ be an ergodic system with positive entropy and let $\p$ and $\Q$ be some finite measurable 
partitions of $X$. Then for every $\varepsilon>0$ there exists $n>1$ such that for every $m>n$ we have 
$$
H_\mu(\p|\Q^{[n,m-1]}\vee\Pi)>H_\mu(\p|\Pi)-\varepsilon.
$$
\end{lemma}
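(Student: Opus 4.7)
The plan is to reduce the bound to a convergence statement and then apply a reverse martingale argument, using as a black box the standard identification of the Pinsker $\sigma$-algebra.

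Since $\Q^{[n,m-1]}\preccurlyeq\Q^{[n,\infty)}$, monotonicity of conditional entropy in the conditioning $\sigma$-algebra gives
\[
H_\mu(\p\mid\Q^{[n,m-1]}\vee\Pi)\;\geq\;H_\mu(\p\mid\Q^{[n,\infty)}\vee\Pi)
\]
for every $m>n$. It is therefore enough to produce, for each $\varepsilon>0$, an $n$ such that $H_\mu(\p\mid\Q^{[n,\infty)}\vee\Pi)>H_\mu(\p\mid\Pi)-\varepsilon$. I would set $\mathcal G_n:=\Q^{[n,\infty)}\vee\Pi$; because $T^{-1}\Q^{[n,\infty)}=\Q^{[n+1,\infty)}$ and $T^{-1}\Pi=\Pi$, the family $(\mathcal G_n)$ is decreasing in $n$, and $\mathcal G_\infty:=\bigcap_n\mathcal G_n$ is a $T$-invariant sub-$\sigma$-algebra containing $\Pi$.

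The crux of the argument is the identification $\mathcal G_\infty=\Pi$ (mod $\mu$). I would invoke here the classical fact that every ergodic system is a $K$-extension of its Pinsker factor: for every finite partition $\Q$ one has
\[
\bigcap_{n\geq 1}\bigl(\Q^{[n,\infty)}\vee\Pi\bigr)\;=\;\Pi\pmod\mu,
\]
referring to \cite{D}, Section~4.2. This is \emph{not} a formal consequence of the definition $\Pi=\bigvee_\p\Pi_\p$, since intersection of $\sigma$-algebras does not distribute over their join; this is precisely the step where the heavier Pinsker-factor theory is needed, and it will be the main obstacle in writing up a clean proof.

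Granted the identification, the conclusion follows by reverse martingale convergence. For each atom $P\in\p$ the sequence $\bigl(\mu(P\mid\mathcal G_n)\bigr)_n$ is a reverse martingale on the decreasing filtration $(\mathcal G_n)$, and by L\'evy's downward theorem it converges almost surely and in $L^1(\mu)$ to $\mu(P\mid\mathcal G_\infty)=\mu(P\mid\Pi)$. Since the Shannon function $\varphi(x_1,\dots,x_l)=-\sum_i x_i\log x_i$ is continuous and bounded on the probability simplex, dominated convergence gives $H_\mu(\p\mid\mathcal G_n)\to H_\mu(\p\mid\Pi)$ as $n\to\infty$; picking $n$ for which the difference is smaller than $\varepsilon$ yields the lemma.
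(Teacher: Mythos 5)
Your approach matches the paper's: reduce to conditioning on $\Q^{[n,\infty)}\vee\Pi$, show these $\sigma$-algebras decrease to $\Pi$, then pass to the limit in conditional entropy (your reverse-martingale argument is exactly the proof of the continuity fact the paper cites as Fact 1.7.11 of \cite{D}). The one place you diverge is that you explicitly flag the identification $\bigcap_n(\Q^{[n,\infty)}\vee\Pi)=\Pi$ as ``the main obstacle'' and outsource it to a citation; this is precisely the step the paper actually proves, and the trick is worth knowing. The paper invokes Krieger's Generator Theorem: the Pinsker factor, being invertible and of entropy zero, admits a finite bilateral generator $\p_0$, so $\Pi=\bigcap_n\p_0^{[n,\infty)}$. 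Then
\[
\Q^{[n,\infty)}\vee\Pi\;\preccurlyeq\;\Q^{[n,\infty)}\vee\p_0^{[n,\infty)}\;=\;(\Q\vee\p_0)^{[n,\infty)},
\]
and the intersection over $n$ is squeezed between $\Pi$ (which is contained in every $\Q^{[n,\infty)}\vee\Pi$) and $\bigcap_n(\Q\vee\p_0)^{[n,\infty)}$, the remote future of $\Q\vee\p_0$, which lies in $\Pi$ by definition of the Pinsker algebra. This converts the problematic ``intersection of a join'' into an honest remote future of a single finite partition, for which the containment in $\Pi$ is immediate. With that step filled in, your write-up would be complete and correct.
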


\begin{proof}
The subtlety is that in general it is not true that if $\mathfrak F_n$ is a decreasing \sq\ of sigma-algebras 
and $\mathfrak G$ is another sigma-algebra then $\bigcap_n(\mathfrak G\vee\mathfrak F_n) = \mathfrak G\vee\bigcap_n\mathfrak F_n$. 
However, in our case it will be so; the conditioning sigma-algebras $\Q^{[n,\infty)}\vee\Pi$ decrease to $\Pi$. 
Here is why: the Pinsker factor, as an invertible zero-entropy system always admits a finite bilateral generator $\p_0$ 
(we are using Krieger's Generator Theorem) and then $\Pi = \bigcap_n \p_0^{[n,\infty)}$. Thus 
$$
\Q^{[n,\infty)}\vee\Pi\preccurlyeq\Q^{[n,\infty)}\vee\p_0^{[n,\infty)}=(\Q\vee\p_0)^{[n,\infty)},
$$ 
and
$$
\Pi\preccurlyeq\bigcap_n(\Q^{[n,\infty)}\vee\Pi)\preccurlyeq\bigcap_n(\Q\vee\p_0)^{[n,\infty)}\preccurlyeq\Pi.
$$
Further, since conditional entropy respects decreasing limits of the conditioning sigma-algebras (see \cite{D} 
Fact 1.7.11 (1.7.14)), we have
$$
H_\mu(\p|\Pi) = H_\mu(\p|\bigcap_n(\Q^{[n,\infty)}\vee\Pi))  = \lim_n H_\mu(\p|\Q^{[n,\infty)}\vee\Pi),
$$
which implies that for large enough $n$, $H_\mu(\p|\Q^{[n,\infty)}\vee\Pi)>H_\mu(\p|\Pi)-\varepsilon$.
Obviously, this inequality will be maintained if we replace the sigma-algebra $\Q^{[n,\infty)}$ by the partition $\Q^{[n,m-1]}$.
\end{proof}

\section{The main proof}

\subsection{Measure-theoretic part of the proof}

Consider an arbitrary \tl\ \ds\ \xt\ with positive \tl\ \en. By the variational principle, there exists an ergodic $T$-\inv\ Borel probability measure $\mu$ on $X$ with positive Kolmogorov--Sinai entropy. We fix $\mu$ and we choose a \sq\ of measurable partitions $\p_k$ with the following properties: 
\begin{itemize}
\item the partitions with even indices $2k$ are chosen so that $\p_{2k+2}\succcurlyeq\p_{2k}$ and $\diam(\p_{2k})\to 0$;
\item for all odd indices $2k-1$, $\p_{2k-1}$ equals one fixed finite partition $\p$ which has positive dynamic entropy $h=h_\mu(T,\p)$ and contains one special set $P_0$ of measure smaller than $\delta$ such that if $x,y$ belong to different atoms of $\p$ other than $P_0$ then $\dist(x,y)>t_0$ ($t_0$ and $\delta$ are some positive numbers).
\end{itemize}
Getting a partition $\p$ with the above property (the ``separating atom'' $P_0$) is an easy exercise: One should start with 
an arbitrary partition $\p'$ (having positive dynamical entropy) and then, using regularity of the measure, find inside 
each atom of $\p'$ a compact set of nearly the same measure, and finally define $\p$ as the collection of these compact sets and 
one open set $P_0$ of small measure $\delta$. Now $t_0$ is the minimal distance between the compact members of $\p$. By continuity of the dynamical entropy, it remains positive for $\p$. The above established parameters $t_0$ and $\delta$ will eventually play the role of $t_0$ and the gap between 1 and the lower density in the second requirement in the definition of a scrambled pair. 
Note that the term $\delta$ can be chosen as small as we wish (then $t_0$ will be rather small, too).

We let $h_k$ denote $h_\mu(T,\p_k)$ (of course, $h_k=h$ for odd $k$). Next we will select a strictly increasing \sq\ $S$ of integers starting with $a_1=0$. We will write this sequence as two alternating \sq s, $S=(a_1,b_1,a_2,b_2,a_3,b_3,\dots)$. The \sq\ should grow so fast that the ratios $\frac{b_k}{a_k}$ tend to infinity. Many more conditions on the growth of $S$ will be imposed during the construction. We introduce some more notation. For $k\ge 1$ and $m\ge k$ (including $m=\infty$) we will write 
$$
\R_k = \p_k^{[a_k,b_k-1]},  \text{ \ \ and \ \ }\R_{k,m} = \bigvee_{i=k}^m\R_i 
$$
(for $m=\infty$ the latter is a sigma-algebra rather than a partition). We will also write
$$
\R^{\mathsf{odd}}_{1,2k-1}=\bigvee_{i=1}^k\R_{2i-1}, \ \ \R^{\mathsf{even}}_{2,2k}=\bigvee_{i=1}^k\R_{2i}\text{ \ \ and \ \ }
\mathfrak R = \R^{\mathsf{even}}_{2,\infty}.
$$

Let $(\varepsilon_k)_{k\ge 0}$ be a summable \sq\ of positive numbers with a small sum $\varepsilon_0$. Again, many more conditions
on the decay of this \sq\ will be imposed later. As we have mentioned in the introduction, the conditional Shannon--McMillan Theorem implies that choosing the length $n_k=b_k-a_k$ large enough we can assure that for a family of atoms $y$ of the Pinsker sigma-algebra $\Pi$, of joint measure at least $1-\varepsilon_k$, the partition $\R_k$ is $\mu_y$-roughly equal with parameters $\varepsilon_k, n_k, h_k$. For atoms $y$ from a set $Y$ of measure $1-\varepsilon_0$, this holds for every~$k$.

By our Lemma~\ref{l2}, we can arrange our \sq\ $S$ to grow so fast that for every $k>1$,
\begin{equation}\label{jeden}
H(\R_{1,k-1}|\R_k\vee\Pi)> H(\R_k|\Pi)-\varepsilon^2_k.
\end{equation}
Using disintegration, we can rewrite the above, as
$$
\int H_y(\R_{1,k-1}|\R_k)\,d\nu(y) > \int H_y(\R_{1,k-1})\,d\nu(y)- \varepsilon^2_k,
$$
where $H_y$ is the entropy with respect to the disintegration measure $\mu_y$ of $\mu$ over the atoms $y$ of $\Pi$, and where $\nu$ denotes the marginal of $\mu$ on this sigma-algebra. Because the integrated function on the left is, at every $y$, not larger than that on the right, we conclude that for $y$'s of joint measure at least $1-\varepsilon_k$ we have
\begin{equation}\label{dwa}
H_y(\R_{1,k-1}|\R_k)> H_y(\R_{1,k-1})-\varepsilon_k,
\end{equation}
i.e., $\R_{1,k-1}\bot^{\!\!\varepsilon_k}\R_k$ with respect to the measure $\mu_y$. There exists a set $Y'$ of atoms $y$, of joint
measure at least $1-\varepsilon_0$, where the above holds for all $k$. Of course, $Y\cap Y'$ has measure at least $1-2\varepsilon_0$ which is positive. We fix an atom $y\in Y\cap Y'$ (the fact the $y$ belongs to $Y$ will be used later). 

By elementary identities concerning conditional entropy, we have, for $m> k$,
\begin{multline*}
H_y(\R_{1,m}) = \\
H_y(\R_{1,m-1}|\R_m)+H_y(\R_m)> H_y(\R_{1,m-1})+H_y(\R_m) -\varepsilon_m =\\
H_y(\R_{1,m-2}|\R_{m-1})+H_y(\R_{m-1})+ H_y(\R_m) -\varepsilon_m > \\
H_y(\R_{1,m-2})+ H_y(\R_{m-1})+ H_y(\R_m)-\varepsilon_{m-1}-\varepsilon_m = \\
\vdots \\ 
H_y(\R_{1,k-1})+H_y(\R_k)+H_y(\R_{k+1})+ \dots + H_y(\R_m)-\varepsilon_k-\varepsilon_{k+1}-\dots-\varepsilon_m.
\end{multline*}
Denoting by $\xi^2_k$ the tail $\sum_{i\ge k}\varepsilon_k$, we draw two conclusions:
$$
H_y(\R_{1,m}) = H_y(\R_{1,k-1}\vee\R_{k,m}) \ge H_y(\R_{1,k-1})+H(\R_{k,m})-\xi_k^2.
$$
i.e., $\R_{1,k-1}\bot^{\!\!\xi^2_k}\R_{k,m}$ with respect to $\mu_y$,
and
$$
H_y(\R_{1,m}) = H_y(\R_k\vee(\R_{1,k-1}\vee\R_{k+1,m})) \ge H_y(\R_k)+H_y(\R_{1,k-1}\vee\R_{k+1,m})-\xi_k^2,
$$
i.e., $\R_k\bot^{\!\!\xi^2_k}(\R_{1,k-1}\vee\R_{k+1,m})$ with respect to $\mu_y$.
Because this holds for all $m> k$, we get
\begin{equation}\label{a}
\R_{1,k-1}\bot^{\!\!\xi^2_k}\R_{k,\infty} \text{ \ \ with respect to $\mu_y$}
\end{equation}
(which we will use a bit later) and $\R_k\bot^{\!\!\xi^2_k}(\R_{1,k-1}\vee\R_{k+1,\infty})$. 
Applying the latter to $2k+1$ rather than $k$ and using the fact that the independence passes to sub-sigma algebras, we 
obtain that $\R_{2k+1}\bot^{\!\!\xi^2_{2k+1}}(\R^{\mathsf{odd}}_{1,2k-1}\vee\mathfrak R)$ with respect to $\mu_y$,  
i.e., 
$$
H_y(\R_{2k+1}|\R^{\mathsf{odd}}_{1,2k-1}\vee\mathfrak R)>H_y(\R_{2k+1})-\xi^2_{2k+1}, 
$$
which obviously implies
\begin{equation}\label{trzy}
H_y(\R_{2k+1}|\R^{\mathsf{odd}}_{1,2k-1}\vee\mathfrak R)>H_y(\R_{2k+1}|\mathfrak R)-\xi^2_{2k+1}.
\end{equation}
Exactly the same calculation as we used to pass from \eqref{jeden} to \eqref{dwa} allows us to deduce from \eqref{trzy} that
there exists a set of atoms $z$ of the sigma-algebra $\mathfrak R$, of joint measure $\mu_y$ at least $1-\xi_{2k+1}$, such that
\begin{equation}\label{cztery}
H_{yz}(\R_{2k+1}|\R^{\mathsf{odd}}_{1,2k-1})>H_{yz}(\R_{2k+1})-\xi_{2k+1},
\end{equation}
i.e., $\R^{\mathsf{odd}}_{1,2k-1}\bot^{\!\!\xi_{2k+1}}\R_{2k+1}$ with regard to the measure $\mu_{yz}$ (the disintegration measure of $\mu_y$ with respect to $\mathfrak R$ on the atom $z$, the same as the disintegration of $\mu$ with respect to $\Pi\vee\mathfrak R$ at $y\cap z$). We can require that the \sq\ $\xi_k$ is summable to a number smaller than $\frac12$ (this holds for sufficiently fast decreasing parameters $\varepsilon_k$) and then there exists a set $Z$ of measure $\nu_y$ larger than $\frac12$ of atoms $z$ of $\mathfrak R$ satisfying for all $k$, 
\begin{enumerate}[(a)]
	\item   
	$\R^{\mathsf{odd}}_{1,2k-1}\bot^{\!\!\xi_{2k+1}}\R_{2k+1}$ with regard to the measure $\mu_{yz}$. 
\end{enumerate}

Notice the obvious fact, that (a) holds if we replace the \sq\ of partitions $(\R_k)$ by any subsequence
$(\R_{j_k})$ such that $k\to j_k$ preserves parity. By passing to a subsequence we can control the speed of growth of 
the parameters $n_k$ and of decay of the parameters $\varepsilon_k$. 
\medskip

Because $y\in Y$, each partition $\R_{2k}$ is $\mu_y$-roughly equal with the parameters $\varepsilon_{2k}, n_{2k}, h_{2k}$, and it follows from \eqref{dwa} that with respect to $\mu_y$ it is $\varepsilon_{2k}$-independent of the preceding even-indexed partitions. 

We are now going to apply the first part of Lemma \ref{l1} inductively (for the first time; we will do it again, later). For $k=1$
we apply it with $\alpha=\varepsilon_2, \beta=\varepsilon_3, \gamma=\varepsilon_4, n=n_2$ and $m=n_4$ and obtain that if $\varepsilon_4$ is small and $n_4$ large enough (this can be achieved by passing to a subsequence of $(\R_k)$) then $\R_2\vee\R_4$ is $\mu_y$-roughly equal with the parameters $\varepsilon_2+\varepsilon_3+\varepsilon_4, n_4, h_4$. Repeating this argument we 
easily deduce that, by passing to a sub\sq, we can arrange $\R^{\mathsf{even}}_{2,2k}$ to be $\mu_y$-roughly equal with parameters $\varepsilon_2+\varepsilon_3+\dots+\varepsilon_{2k}$, $n_{2k}$, $h_{2k}$, for every $k$. We denote the first parameter (i.e., $\sum_{i=2}^{2k}\varepsilon_i$) by $s_k$. The following three properties (which we have obtained): 
\begin{enumerate}
	\item $\R^{\mathsf{even}}_{2,2k}$ is $\mu_y$-roughly equal with parameters $s_k$, $n_{2k}$, $h_{2k}$,
	\item $\R_{2k+1}$ is $\mu_y$-roughly equal with parameters $\varepsilon_{2k+1}$, $n_{2k+1}$, $h$,
	\item $\R^{\mathsf{even}}_{2,2k}\bot^{\!\!\varepsilon_{2k+1}}\R_{2k+1}$ with respect to $\mu_y$ (a consequence of \eqref{dwa}),
\end{enumerate}
are open properties of the probability vector assigned by $\mu_y$ to the finite partition $\R_{1,2k+1}$.
It follows from \eqref{a} that $\R_{1,2k+1}\bot^{\!\!\xi^2_{2k+2}}(\R_{2k+2}\vee\R_{2k+4}\vee\dots)$ with respect to $\mu_y$, so, by \eqref{b}, if we arrange $\xi^2_{2k+2}$ small enough, we will have these properties on a large (say, larger than $1-\varepsilon_{2k}$) 
set of atoms $v$ of $(\R_{2k+2}\vee\R_{2k+4}\vee\dots)$:
\begin{enumerate}
	\item $\R^{\mathsf{even}}_{2,2k}$ is $\mu_{yv}$-roughly equal with parameters $s_k$, $n_{2k}$, $h_{2k}$,
	\item $\R_{2k+1}$ is $\mu_{yv}$-roughly equal with parameters $\varepsilon_{2k+1}$, $n_{2k+1}$, $h$,
	\item $\R^{\mathsf{even}}_{2,2k}\bot^{\!\!\varepsilon_{2k+1}}\R_{2k+1}$ with respect to $\mu_{yv}$.
\end{enumerate}
We can now apply the second part of Lemma \ref{l1} (with $\alpha=s_k,\beta=\varepsilon_{2k}, \gamma=\varepsilon_{2k+1}$) 
to deduce that (with a good choice of $\varepsilon_{2k+1}$), $\R_{2k+1}$ is $\mu_{yvu}$-roughly equal with the parameters $\varepsilon_{2k}$, $n_{2k+1}, h$ on a set of atoms $u$ of $\R^{\mathsf{even}}_{2,2k}$ of joint measure $\mu_{yv}$ at least  
$1-s_k-\varepsilon_{2k}$ (which is larger than $1-\varepsilon_0$). 
Notice that the intersection $v\cap u$ is an atom $z$ of $\mathfrak R$. In this manner we have derived:
\begin{enumerate}[(b)]
\item $\R_{2k+1}$ is $\mu_{yz}$-roughly equal with parameters $\varepsilon_{2k}$, $n_{2k+1}, h$
on a set of atoms $z$ of $\mathfrak R$ of joint measure $\mu_y$ larger than $1-\varepsilon_0$.
\end{enumerate}
It is now obvious that there exists a set $Z'$ of measure $\mu_y$ larger than $1-\varepsilon_0$
consisting of atoms $z$ of $\mathfrak R$ satisfying (b) for infinitely many indices $k$. 
We choose an atom $z$ from the nonempty intersection $Z\cap Z'$.

\medskip
At this point we intrude our proof by an ingredient needed for the topological requirements. We address the ``separating set'' $P_0$. By the Ergodic Theorem, we can arrange the lengths $n_k$ so large that for a set of points $x\in X$ of measure $1-\varepsilon_k$, $T^nx$ visits $P_0$ for no more than $n_k\delta$ indices $n\in[a_k,b_k-1]$. In a set $X'$ of measure at least $1-\varepsilon_0$, this happens for every $k$. It is clear that the collection of atoms $y\cap z$ of $\Pi\vee\mathfrak R$ for which $\mu_{yz}(X')<1-\sqrt{\varepsilon_0}$ has joint measure at most $\sqrt{\varepsilon_0}$. Thus, for $\varepsilon_0$ small enough, we can adjust the choices of $y\in Y\cap Y'$ 
and $z\in Z\cap Z'$ so that $\mu_{yz}(X')\ge1-\sqrt{\varepsilon_0}$. Now we can use our Fact \ref{f0}. If we replace $\mu_{yz}$
by its conditional measure on $X'$ then for every $k$, $\R_{2k+1}$ remains roughly equal with the parameters $\varepsilon'_{2k+1}$, $n_{2k+1}, h$ (where $\varepsilon'_{2k+1}$ is slightly larger than $\frac{\varepsilon_{2k+1}}{1-\sqrt{\varepsilon_0}}$). 
By a an adjustment of the parameters we will skip the ``prime'' and still write $\varepsilon_{2k+1}$. We will also write $\mu_{yz}$ (rather than $\mu_{yzX'})$ remembering that we are dealing with the restriction to $X'$. 
It is also easy to see that this restriction does not affect the independences arranged in (a) (again, a small adjustment of $\varepsilon_{2k+1}$ does the job). From now on we will think that all our points belong to the atom $y\cap z$ (which supports of the conditional measure $\mu_{yz}$) intersected with $X'$, in particular, all our partitions are treated as partitions of $y\cap z\cap X'$. 

\medskip
We would like to pass to the subsequence of $S$ along which (b) is fulfilled for $z$. This time, however, we cannot affect the choice of $z$ (and thus neither the construction of $\mathfrak R$). So, we keep all even-indexed partitions $\R_{2k}$ and we select the sub\sq\ only from the odd-indexed partitions. In this manner we can assume that the atom $z$ satisfies both (a) and (b) for all $k$. 

We shall use inductively the first part of Lemma 1 for the second time, now with respect to the measure 
$\mu_{yz}$. In order to have the same parameter in (a) and (b), we set $\delta_k = \max\{\xi_k,\varepsilon_{k-1}\}$
and we require that these numbers form a summable \sq\ with a small sum $\delta_0$. Now we have an almost identical situation 
as in the first inductive application, except that we look at the odd-indexed (rather than even-indexed) partitions $\R_{2k+1}$ and 
we have the parameters $\delta_{2k+1}$ in place of the former $\varepsilon_{2k}$. As a result we get that, by passing to 
a subsequence, we can arrange each of the partitions $\R_{1,2k+1}^{\mathsf{odd}}$ to be $\mu_{yz}$-roughly equal 
with the parameters $r_k, n_{2k+1},h$, where $r_k = \sum_{i=1}^{2k+1}\delta_k$.

The second part of Lemma 1 (with $\alpha=r_{k-1},\beta=\delta_{2k}$ and $\gamma = \delta_{2k+1}$) now gives: 
on atoms $B$ of the partition $\R_{1,2k-1}^{\mathsf{odd}}$, of joint measure $\mu_{yz}$ at least $1-r_k-\delta_{2k}$ (which is 
larger than $1-2\delta_0$), the partition $\R_{2k+1}$ is $\mu_{yzB}$-roughly equal with the parameters $\delta_{2k}$, $n_{2k+1}, h$.
Let $V_k$ denote the union of the good atoms $B$ for which this holds. Using Remark \ref{rem}, we can arrange that the sets $V_k$ decrease.

We can now draw the following conclusion ($h'$ denotes a constant is slightly smaller than $h$): 
\begin{itemize}
	\item If $B$ is an atom of $\R_{1,2k-1}^{\mathsf{odd}}$, contained in $V_k$, then it contains at least $2^{n_{2k+1}h'}$ (nonempty) atoms of $\R_{1,2k+1}^{\mathsf{odd}}$ contained in $V_{k+1}$.
\end{itemize}

\subsection{Counting Hamming balls}\label{sect}

Let $l$ denote the cardinality of $\p$. In the symbolic representation of the process $(X,\p,\mu,T)$, the partition 
$\R_{2k+1}$ depends on the interval of coordinates $[a_{2k+1},b_{2k+1}-1]$ of length $n_{2k+1}$ and its atoms 
can be identified with blocks over the alphabet $\p$. The \emph{Hamming distance} between two such 
blocks is the number of positions where the blocks disagree, divided by $n_{2k+1}$. 

It is a standard fact that any ball of radius $\delta$ in the Hamming distance around a block 
of length $n$ has at most $2^{n(H(\delta,1-\delta)+\delta\log l)}$ elements (other blocks). 
For a small enough $\delta$ the term $\beta(\delta)=H(3\delta,1-3\delta)+3\delta\log l$ is smaller than $h'$
(the constants $h'$ and $\delta$ depend only on $h$ and they and can be established at the start of the proof). 

\medskip
We will now indicate a family $B_\kappa$, where $\kappa$ ranges over all finite binary blocks, such that $B_\kappa$
is a (nonempty) atom of $\R^{\mathsf{odd}}_{1,2k+1}$ contained in $V_{k+1}$, where $k$ is the length of $\kappa$ (for $k=0$, $\kappa$ 
is the empty word). We will assure that if $\iota$ extends $\kappa$ to the right then $B_{\iota}\subset B_\kappa$.
Moreover, we will arrange that if $\kappa\neq\kappa'$, both have the same length $k$, and differ at a position $i_0$, 
then the Hamming distance between $A_i$ and $A_i'$ is at least $3\delta$ for all $i=i_0,i_0+1,\dots,k$, where $A_i$ 
and $A_i'$ are the blocks corresponding to the coordinates $[a_{2i+1},b_{2i+1-1}]$ appearing in the 
representation of the atoms $B_\kappa$ and $B_{\kappa'}$, respectively. We will do it by induction on $k$, in each step 
we choose two ``children'' of every so far constructed atom of $\R^{\mathsf{odd}}_{1,2k-1}$. 

In step $k=0$ we assign $B_\emptyset$ to be an arbitrarily selected atom of $\R_1$ contained in $V_1$. 
Suppose the task has been completed for some $k-1$, i.e., that we have selected $2^{k-1}$ atoms $B_\kappa$ 
of $\R^{\mathsf{odd}}_{1,2k-1}$, contained in $V_{k-1}$, and pairwise $3\delta$-separated as required. We know that 
every such $B_\kappa$ contains at least $2^{n_{2k+1}h'}$ atoms of $\R_{1,2k+1}^{\mathsf{odd}}$ contained in $V_{k+1}$. 
Every such atom has the form $B_\kappa\cap A$ where $A$ is an atom of $\R_{2k+1}$. We will call the atoms $A$ such 
that $B_\kappa \cap A$ is contained in $V$ \emph{good continuations} of $B_\kappa$. 

Let $\mathcal A(B_\kappa)$ denote the family of all good continuations of $B_\kappa$. Take the first selected atom
$B_{\kappa_1}$ (for $\kappa_1=000\dots0$). Choose one good continuation $A_0$ of $B_{\kappa_1}$. From every family
$\mathcal A(B_\kappa)$ (including $\kappa=\kappa_1$) we eliminate (for future choices) all the atoms $A$ that belong 
to the Hamming ball of radius $3\delta$ around $A_0$. Every family $\mathcal A(B_\kappa)$ has ``lost'' at most $2^{n_{2k+1}\beta(\delta)}$ elements (out of $2^{n_{2k+1}h'}$), so vast majority remains. Next we choose $A_1$ from the remaining good continuations of $B_{\kappa_1}$ and again, from each of the families $\mathcal A(B_\kappa)$ we eliminate all the atoms $A$ that belong 
to the Hamming ball of radius $\delta$ around $A_1$. Again, the losses are negligibly small. We assign 
$B_{\kappa_10}=B_{\kappa_1}\cap A_0$ and $B_{\kappa_11}=B_{\kappa_1}\cap A_1$ (here $\kappa_10$ and $\kappa_11$ denote the
two continuations of $\kappa_1$).

Next we abandon $B_{\kappa_1}$ and pass to $B_{\kappa_2}$ (earlier we need to order the $\kappa$'s somehow) and we repeat the procedure choosing two of its good continuations, say $A'_0,A'_1$, not eliminated in the preceding steps, each time eliminating for future 
choices all members of the respective Hamming balls. We proceed until we choose two good continuations for every $\kappa$ of length $k$. Note that near the end of this procedure we will have eliminated less than $2^{k+n_{2k+1}\beta(\delta)}$ blocks from each family $\mathcal A(B_\kappa)$, which is still a negligible fraction of the cardinality of $\mathcal A(B_\kappa)$, hence the procedure will 
be possible till the end.

\subsection{Verification of DC2} 

From now on we use the letter $\kappa$ to denote infinite binary \sq s. 
For every $\kappa$ we define a set $C_\kappa$ as the decreasing intersection 
$\bigcap_k\overline{B_{\kappa_k}}$, where $\kappa_k$ is the initial string of length $k$ of $\kappa$, and the closure is taken in $X$ (the sets $B_{\kappa_k}$ are understood as subsets of $y\cap z\cap X'$, still, we close them in the entire compact space $X$). In this manner we have defined a family of nonempty closed sets indexed by the binary \sq s. The scrambled set $E$ is obtained by selecting one point from every set $C_\kappa$ (a priori we have no guarantee that the sets $C_\kappa$ are pairwise disjoint, but this will become
obvious if we show that every pair of points in $E$ is scrambled). 

Take two points, $x_1,x_2$ from $E$. They belong to the closure of the same atom $z$ of $\mathfrak R$ (also to the closure of the 
same atom $y$ of $\Pi$, but this fact has no importance). Any pair of points $x,x'$ belonging to $z$ (without the closure) satisfy $\dist(T^nx,T^nx')\le\varepsilon_k$ for all $k$ and $n\in [a_{2k},b_{2k}-1]$. This property obviously passes to all pairs in the closure of the atom $z$, in particular to $x_1,x_2$. Since $\frac{b_{2k}}{a_{2k}}$ tends to infinity, it is very easy to see that for every $t>0$ the orbits of our two points are at least $t$ apart along a set of times of upper density~1. We have shown that $E$ satisfies the first requirement of being DC2-scrambled.

\smallskip
On the other hand, our two points belong to the sets $C_\kappa$ for two $\kappa$'s that differ at some position $i_0$.
This implies that for each $k\ge i_0$ they belong to the closures of two atoms $B_{\kappa_k}$, $B_{\kappa'_k}$  where
$\kappa_k$ and $\kappa_k'$ are binary strings of length $k$ differing at $i_0$. We will first analyze the behavior of a pair 
$x,x'$ belonging to the atoms $B_{\kappa_k}$, $B_{\kappa'_k}$, respectively (without the closures). By the construction, we 
know that the Hamming distance between the blocks $A_{\kappa_k}$ and $A_{{\kappa'}_{\!k}}$, representing the atoms of $\R_{2k+1}$
to which belong $x$ and $x'$, is at least $3\delta$. This means that $T^nx,T^nx'$ belong to different atoms of $\p$ for
at least $n_{2k+1}3\delta$ times $n\in [a_{2k+1},b_{2k+1}-1]$. Because we treat these atoms as subsets of $X'$, the situation, 
where at least one of the aforementioned atoms of $\p$ is $P_0$ may happen at most $n_{2k+1}2\delta$ times, leaving at least 
$n_{2k+1}\delta$ times $n$ when the two points fall in two different atoms of $\p$, different from $P_0$. By the property of $P_0$, 
then the distance between these points is at least $t_0$. Now observe that the property 
\begin{itemize}
\item $\dist(T^nx,T^nx')\ge t_0$ for at least $n_{2k+1}\delta$ times $n\in[a_{2k+1},b_{2k+1}-1]$
\end{itemize}
is closed, hence it passes to the pairs of points selected from the closures of the respective atoms. 
We deduce that our points $x_1$, $x_2$ have this property for every $k\ge i_0$. Since $\frac{b_{2k+1}}{a_{2k+1}}$ tends to infinity, this easily proves that 
$\dist(T^nx,T^nx')\ge t_0$ with upper density at least $\delta$. In particular, the points $x_1,x_2$ satisfy the second requirement in the definition of a DC2-scrambled pair, end of the proof.
\qed 

\section{Final remarks}

\begin{rem}{\rm Notice that our scrambled set has additional properties: the parameters of scrambling (i.e., the number $t_0$
and the lower density $\delta$) are common for all pairs (which is good, in the sense that the chaos is strong). Such situation
is called \emph{uniform DC2} (see \cite{BS}, see also \emph{uniform DC1} in \cite{O2}).

On the other hand, the scrambling is ``synchronic'', i.e., all points in the scrambled set get together at the same times (at least
we do not control how often they get together pairwise at other times). Such behavior is somewhat too regular to satisfy what one intuitively expects from ``chaos'' (one expects some kind of independence in the behavior of points). But... this is how chaos is defined -- the definition admits ``synchronic chaos''. It might be interesting to invent and study a stronger notion of chaos which imposes (with positive upper density) some pairs to be together at the same times 
as other pairs are apart. The author is aware that such definitions are around the corner. In particular, one might ask whether 
such a strengthened chaos follows from positive entropy.}
\end{rem}

\begin{rem}{\it This remark is due to Piotr Oprocha. The author thanks him for the contribution. 
\rm Sm\'ital also conjectured that in the positive entropy case the DC2-scrambled set can be obtained perfect (see e.g. \cite{S2}). In fact, we can prove this conjecture as well. 
The sets $C_\kappa$ constructed in Section \ref{sect} are disjoint (this follows from the separation by the distance $t_0$ proved later), compact and have a compact union. It is also seen from the construction, that ``belonging in the same $C_\kappa$'' is a Borel equivalence relation on the union. A special case of Silver's Theorem (see \cite{Sr} Theorem 5.13.9) implies that if the classes of a Borel equivalence relation are of type $F_\sigma$ and there are uncountably many of them then there exists a perfect set having at most one element in every class. It is easy to see (by the rules of separation) that this perfect set is a Cantor set.}
\end{rem}

\bigskip\noindent
Institute of Mathematics and Computer Science, Wroclaw University of Technology, Wybrze\.ze Wyspia\'nskiego 27, 50-370 Wroc\l aw, Poland

\noindent
downar@pwr.wroc.pl

\end{document}